\theoremstyle{plain}
\newtheorem{theorem}{Theorem}
\newtheorem{corollary}[theorem]{Corollary}
\newtheorem{lemma}[theorem]{Lemma}
\newcommand{\zp}{\mathbb{Z}/p\mathbb{Z}}
\newcommand{\OO}{\mathcal{O}}
\newcommand{\id}{\mathrm{Id}}
\title{A fast coset-translation algorithm for computing the cycle structure of Comer relation algebras over $\mathbb{Z}/p\mathbb{Z}$}
\author{Jeremy F. Alm\\Department of Mathematics\\Lamar University \\ Beaumont, TX 77710\\
\texttt{alm.academic@gmail.com}\\
\and Andrew Ylvisaker\\Austin, MN\\
\texttt{aylvisaker@gmail.com} }
\date{\today}
\begin{document}

\maketitle

\begin{abstract}
Proper relation algebras can be constructed using $\mathbb{Z}/p\mathbb{Z}$ as a base set using a method due to Comer.  The cycle structure of such an algebra must, in general, be determined \emph{a posteriori}, normally with the aid of a computer.  In this paper, we give an improved algorithm for checking the cycle structure that reduces the time complexity from $\mathcal{O}(p^2)$ to $\mathcal{O}(p)$.
\end{abstract}

\section{Introduction}

Comer \cite{comer83} introduced a technique for constructing finite integral proper relation algebras using $\zp$ as a base set for $p$ prime.  Set $p=nk+1$.  Then there is a multiplicative subgroup $H < (\zp)^\times$ of order $k$ and index $n$, and the subgroup $H$ can be used to construct a proper relation algebra of order $2^{n+1}$.  Specifically, fix $n\in \mathbb{Z}^+$, and let $X_0=H$ be the unique multiplicative subgroup of $(\zp)^\times$ of index $n$.  Let $X_1, \ldots X_{n-1}$ be its cosets; in particular, let $X_i = g^i\cdot X_0 = \{ g^{an+i}  : a \in \mathbb{Z}^+ \}$, where $g$ is a primitive root modulo $p$, i.e., $g$ is a generator of $(\zp)^\times$.  Then define relations 
\[
    R_i = \{(x,y)\in\zp\times\zp : x-y \in X_i\}. 
\]
 The $R_i$'s, along with $\id=\{(x,x):x\in\zp\}$, partition the set $\zp\times\zp$.  The $R_i$'s will be symmetric if $k$ is even and asymmetric otherwise.  For more information on relation algebras, see  \cite{Madd} or \cite{HH}. 
 
 Given a prime $p$ and integer $n$ a divisor of $p-1$, it is difficult  in general to say much about the cycle structure of the algebra generated by the $R_i$'s (although it is shown in \cite{Alm} that if $p>n^4+5$ then the cycles $(R_i,R_i,R_i)$ are mandatory); one must use a computational approach. The cycle $(R_i, R_j, R_k)$ is forbidden just in case $(X_i + X_j) \cap X_k = \varnothing$. Naively, one computes all the sumsets  $X_i + X_j $, though because of rotational symmetry, one may assume $i=0$:
 
 \begin{lemma}\label{lem:1}
 Let  $n \in \mathbb{Z}^{+}$ and let
 $p = nk + 1$ be a prime number and $g$ a primitive root modulo $p$. 
 
 For $i \in \left\{0,1,\ldots,n-1\right\}$, define

\[ X_{i} = \left\{g^{i},g^{n + i},g^{2n + i},\ldots,g^{(k-1)n + i}\right\}.\]

Then  $(X_0 + X_j) \cap X_k = \emptyset$ if and only if  $(X_i + X_{i+j}) \cap X_{i+k} = \emptyset$.
 \end{lemma}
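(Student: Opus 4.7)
The plan is to exploit the multiplicative structure of the cosets: since $X_i = g^i \cdot X_0$, the entire configuration $(X_0, X_j, X_k)$ can be translated to $(X_i, X_{i+j}, X_{i+k})$ by multiplying through by $g^i$, and multiplication by a unit of $\zp$ is a bijection that commutes with both taking intersections and forming sumsets.

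More concretely, I would first note the two identities $X_i = g^i \cdot X_0$ and, since $g^i \cdot g^{an+j} = g^{an+(i+j)}$, also $g^i \cdot X_j = X_{i+j}$ and $g^i \cdot X_k = X_{i+k}$ (with indices read modulo $n$, which is valid because $g^n \in X_0$). Then for any subsets $A, B \subseteq \zp$ the distributive law gives
\[
g^i \cdot (A + B) \;=\; \{g^i(a+b) : a \in A, b \in B\} \;=\; (g^i A) + (g^i B),
\]
so applying this with $A = X_0$ and $B = X_j$ yields
\[
g^i \cdot (X_0 + X_j) \;=\; (g^i X_0) + (g^i X_j) \;=\; X_i + X_{i+j}.
\]

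The final step is to observe that multiplication by $g^i$ is a bijection of $\zp$, so it preserves and reflects intersections of subsets. Thus
\[
g^i \cdot \bigl((X_0 + X_j) \cap X_k\bigr) \;=\; \bigl(g^i \cdot (X_0 + X_j)\bigr) \cap \bigl(g^i \cdot X_k\bigr) \;=\; (X_i + X_{i+j}) \cap X_{i+k},
\]
and one side is empty precisely when the other is. There is no real obstacle here; the only thing to be careful about is the index arithmetic modulo $n$, which is handled by the fact that $g^n$ lies in $X_0$ so multiplying a coset $X_j$ by $g^n$ leaves it invariant.
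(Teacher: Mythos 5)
Your proposal is correct and is exactly the paper's argument: the paper's entire proof is the remark ``just multiply through by $g^i$,'' and you have simply written out the details (that $g^i \cdot X_j = X_{i+j}$, that multiplication by a unit distributes over sumsets, and that it preserves and reflects intersections). Nothing to add.
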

 The lemma is trivial to prove: just multiply through by $g^i$!

  In 1983, Comer used his technique to construct (representations of) symmetric algebras with $n$ diversity atoms forbidding exactly the 1-cycles (i.e., the ``monochrome triangles'') for $1\leq n \leq 5$. Comer did the computations by hand, and his work was later extended via computer in \cite{Mad11} ($n \leq 7$), \cite{AlmManske2015} ($n \leq 400, \ n \neq 8,13 $), and \cite{Alm} ($401 \leq n \leq 2000$).  This last significant advance was made possible by a much less general version of the improved algorithm presented here.  Another variation was used in \cite{AlmAndrewsManske} to construct representations of algebras in which all the diversity atoms are flexible, and to give the first known cyclic group representation of relation algebra $32_{65}$.  Finally, an ad hoc variant was used in \cite{AlmMad} to give the first known finite representation of relation algebra $59_{65}$.

\section{Symmetric Comer algebras}

The following na\"ive algorithm, used in \cite{AlmManske2015} to find Ramsey algebras for $n \leq 400, \ n \neq 8,13 $, computes all the sumsets $X_0 + X_i$.

\begin{algorithm}[H]\label{alg1}
 \KwData{A prime $p$, a divisor $n$ of $(p-1)/2$ such that $(p-1)/n$ is even, a primitive root $g$ modulo $p$}
 \KwResult{a list of mandatory and forbidden cycles of the form $(0,x,y)$}
 
 Compute $X_0 = \{g^{an} \pmod{p} : 0\leq a < (p-1)/n \}$;

 \For{$i\leftarrow 0$ \KwTo $n-1$}{
    $X_i = \{g^{an+i} \pmod{p} : 0\leq a < (p-1)/n \}$;
    
    Compute $X_0 + X_i \pmod{p}$;
    
    \For{$j\leftarrow i$ \KwTo $n-1$}{
        Compute $X_j = \{g^{an+j} \pmod{p} : 0\leq a < (p-1)/n \}$;
        
        \uIf{$(X_0 + X_i)\supseteq X_j$}{
            add $(0,i,j)$ to list of mandatory cycles
            }
            
            \Else{
            add $(0,i,j)$ to list of forbidden cycles
            }
    
    }
 }
 
\caption{Na\"ive algorithm for symmetric Comer algebras}
\end{algorithm}

The following lemma is very easy to prove and was apparently known Comer.  The algorithmic speed-up it provides, however, was not previously noticed.

\begin{lemma} \label{lem2}
Let  $n \in \mathbb{Z}^{+}$ and let
 $p = nk + 1$ be a prime number, $k$ even, and $g$ a primitive root modulo $p$. 
 For $i \in \left\{0,1,\ldots,n-1\right\}$, define

\[ X_{i} = \left\{g^{i},g^{n + i},g^{2n + i},\ldots,g^{(k-1)n + i}\right\}.\]

Then if $(X_0 + X_i) \cap X_j \neq \emptyset$, then $(X_0 + X_i) \supseteq X_j$. 
\end{lemma}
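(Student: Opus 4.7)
The plan is to exploit the fact that $X_0$ is a multiplicative subgroup of $(\zp)^\times$ and each $X_i$ is a coset of it. The key observation is that for any $h \in X_0$, multiplication by $h$ fixes every coset setwise: $h \cdot X_0 = X_0$ and more generally $h \cdot X_i = X_i$ for every $i$. This means that if a single equation $x + y = z$ with $x \in X_0$, $y \in X_i$, $z \in X_j$ can be rescaled by any $h \in X_0$ to produce another such equation $hx + hy = hz$, again with the three summands lying in $X_0$, $X_i$, $X_j$ respectively.

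First I would unpack the hypothesis: pick $z \in (X_0 + X_i) \cap X_j$ and write $z = x + y$ with $x \in X_0$ and $y \in X_i$. Next I would pick an arbitrary target element $z' \in X_j$ and use the coset structure to write $z' = h z$ for some $h \in X_0$ (this is just the statement that any two elements of the same coset differ by a factor in the subgroup). Finally I would multiply the equation $x + y = z$ through by $h$ to get $z' = hx + hy$, and observe that $hx \in X_0$ and $hy \in X_i$, so $z' \in X_0 + X_i$. Since $z'$ was arbitrary in $X_j$, this proves $X_j \subseteq X_0 + X_i$.

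There is no real obstacle here; the only thing to be careful about is being explicit that the coset-translation map used in Lemma~\ref{lem:1} is really being reused at the level of a single coset rather than across cosets. In particular, the hypothesis that $k$ is even plays no role in the argument itself; it is inherited from the symmetric-algebra setting of the section, which only guarantees that the resulting algebra is symmetric (equivalently, that each $X_i$ is closed under negation). The entire proof is just the observation that $X_0 + X_i$, being a union of products of $X_0$ with the fixed coset $X_i$, is automatically stable under multiplication by $X_0$, and hence is a union of full cosets.
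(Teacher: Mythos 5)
Your proof is correct and is exactly the argument the paper has in mind (the paper omits it, calling the lemma ``very easy to prove''): the sumset $X_0 + X_i$ is stable under multiplication by the subgroup $X_0 = H$, so it is a union of full cosets, and your remark that the evenness of $k$ is not actually used is also accurate. This is the same coset-translation idea used for Lemma~\ref{lem:1}, just applied multiplicatively within a single coset rather than to shift indices.
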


In particular, Lemma \ref{lem2} implies that Comer's construction always yields a relation algebra. 

\begin{corollary}\label{cor}
A diversity cycle $(X_0,X_i,X_j)$ is forbidden if and only if $(g^j - X_0)\cap X_i = \emptyset$. 
\end{corollary}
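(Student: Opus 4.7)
The plan is to chain two very short equivalences. By definition the cycle $(X_0, X_i, X_j)$ is forbidden exactly when $(X_0 + X_i) \cap X_j = \emptyset$, so the task is to replace the set-level condition involving all of $X_j$ by a single-element condition involving only $g^j$, and then to rewrite that condition as an intersection with $X_i$.

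First I would invoke Lemma \ref{lem2}. The contrapositive of that lemma says: if $X_j \not\subseteq X_0 + X_i$ then $(X_0 + X_i) \cap X_j = \emptyset$. Combined with the trivial forward implication (if $X_j \subseteq X_0 + X_i$ then certainly the intersection is nonempty), we get that $(X_0 + X_i) \cap X_j = \emptyset$ is equivalent to any \emph{single} element of $X_j$ failing to lie in $X_0 + X_i$. Since $g^j \in X_j$ (take $a = 0$ in the description $X_j = \{g^{an+j} : 0 \le a < k\}$), forbiddenness is equivalent to $g^j \notin X_0 + X_i$.

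Second, I would unpack $g^j \notin X_0 + X_i$ algebraically: this says there is no pair $(x,y) \in X_0 \times X_i$ with $x + y \equiv g^j \pmod{p}$, equivalently, no $y \in X_i$ of the form $g^j - x$ for some $x \in X_0$, equivalently $(g^j - X_0) \cap X_i = \emptyset$. Concatenating the two equivalences gives the corollary.

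There is no real obstacle here; the only subtlety is remembering that Lemma \ref{lem2} is what lets us collapse the set containment $X_j \subseteq X_0 + X_i$ into membership of a single representative, and that $g^j$ is a convenient such representative because it is literally the $a=0$ element of $X_j$.
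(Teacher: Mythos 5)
Your proof is correct and is exactly the argument the paper intends (the paper states the corollary without writing out a proof, but it is meant to follow from Lemma \ref{lem2} in precisely the way you describe): Lemma \ref{lem2} upgrades nonempty intersection to containment, so membership of the single representative $g^j \in X_j$ in $X_0 + X_i$ decides everything, and $g^j \in X_0 + X_i$ rewrites as $(g^j - X_0) \cap X_i \neq \emptyset$. No gaps.
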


Corollary \ref{cor} affords us the following faster algorithm for computing the cycle structure of Comer relation algebras.

\begin{algorithm}[H] \label{alg2}
 \KwData{A prime $p$, a divisor $n$ of $(p-1)/2$, a primitive root $g$ modulo $p$}
 \KwResult{a list of mandatory and forbidden cycles of the form $(0,x,y)$}
 Compute $X_0 = \{g^{an} \pmod{p} : 0\leq a < (p-1)/n \}$;
 
 Compute $g^j - X_0 \pmod{p}$ for each $0\leq j < n$;
 
 \For{$i\leftarrow 0$ \KwTo $n-1$}{
    $X_i = \{g^{an+i} \pmod{p} : 0\leq a < (p-1)/n \}$
    
    \For{$j\leftarrow i$ \KwTo $n-1$}{
        \eIf{$(g^j - X_0)\cap X_i \neq \emptyset$}{
            add $(0,i,j)$ to list of mandatory cycles
            }{
            add $(0,i,j)$ to list of forbidden cycles
            }
    
    }
 }
 
  \caption{Fast algorithm for symmetric Comer algebras}
\end{algorithm}

 For example, let $p=113$, $n=7$.  Then $k=16$. Since $k$ is even, we get a symmetric algebra, i.e. $X_i = -X_i$.  The forbidden cycles that Algorithm \ref{alg2} spits out are $(0,0,0)$, $(0,0,4)$, and $(0,3,3)$.  Note that $(0,0,4)$ and $(0,3,3)$ are equivalent by Lemma \ref{lem:1}.  (Here we are using $g=3$, the smallest primitive root modulo $p$.)

 For another example, let $p=71$, $n=10$.  Then $k=7$ is odd, so we get an asymmetric algebra with five pairs $X_i, X_{i+5}$ such that $X_i = -X_{i+5}$.  The forbidden cycles are as follows:
 \[
 \begin{tabular}{ccc}
     (0, 0, 0) & (0, 0, 3) & (0, 0, 4) \\
(0, 0, 5) & (0, 0, 8) & (0, 0, 9) \\
(0, 1, 2) & (0, 1, 4) & (0, 1, 6) \\
(0, 1, 7) & (0, 2, 3) & (0, 2, 6) \\
(0, 2, 7) & (0, 2, 9) & (0, 3, 3) \\
(0, 3, 5) & (0, 3, 9) & (0, 4, 5) \\
(0, 4, 7) & (0, 4, 8) & (0, 5, 5) \\
(0, 5, 7) & (0, 6, 8) & (0, 7, 8) \\
(0, 7, 9) & (0, 8, 8) &
 \end{tabular}
 \]
 
 Here we are using $g=7$, the smallest primitive root modulo $p$. Note that the choice of primitive root does affect how the various cosets are indexed -- in particular, one always has $g\in X_1$ --  but it does not affect which relation algebra one gets.


\begin{theorem}
Algorithm \ref{alg1} runs in $\OO(p^2)$ time while Algorithm \ref{alg2} runs in $\OO(p)$ time, for $n$ fixed.
\end{theorem}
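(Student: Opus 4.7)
The plan is to fix $n$ (so that $k=(p-1)/n=\Theta(p)$) and perform a line-by-line running-time analysis of each algorithm, treating arithmetic mod $p$ and membership queries as $O(1)$ operations once the sets are represented as characteristic (bit) vectors of length $p$. Under this model, converting an explicit list $S\subseteq\zp$ into its characteristic vector takes time $O(|S|)$, testing membership in a stored set is $O(1)$, and testing whether two sets intersect or one contains another takes $O(p)$ in the worst case.

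For Algorithm \ref{alg1}, I would first note that building $X_0$ via the given formula costs $O(k)=O(p)$, and each $X_i$ likewise costs $O(k)=O(p)$ to enumerate. The dominant step is computing the sumset $X_0+X_i\pmod p$: this requires running over all $k^2$ pairs $(x,y)\in X_0\times X_i$ and recording $x+y$ in a characteristic vector, taking $\Theta(k^2)=\Theta(p^2)$ work. The inner loop then performs $n$ containment checks of cost $O(k)=O(p)$ each. Summing over $i=0,\ldots,n-1$ and using $n=O(1)$, the total is $n\cdot\Theta(p^2)+n^2\cdot O(p)=\Theta(p^2)$.

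For Algorithm \ref{alg2}, the key observation is that the expensive sumset computation is replaced by precomputing the $n$ translates $g^j-X_0$. Each such translate has exactly $k$ elements, obtained by one subtraction per element, so all $n$ translates together cost $O(nk)=O(p)$ to build and store as characteristic vectors. Enumerating each $X_i$ in the outer loop again costs $O(k)=O(p)$, contributing $O(nk)=O(p)$ in total. Finally, each of the $O(n^2)$ intersection tests $(g^j-X_0)\cap X_i\neq\varnothing$ is executed by walking through one of the two sets (size $k$) and looking up in the characteristic vector of the other, which costs $O(k)=O(p)$; but since $n$ is fixed, the cumulative cost of all $n^2$ such tests is $O(n^2 p)=O(p)$. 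Adding the pieces gives $O(p)$.

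I do not anticipate a substantive obstacle here: once the data-structure convention is fixed, both analyses reduce to bookkeeping. The only point that deserves emphasis is that fixing $n$ is essential: without it, the nested loop over $(i,j)$ contributes a factor of $n^2$ and the precomputation of translates a factor of $n$, so the honest bound for Algorithm \ref{alg2} would be $O(n^2 p)$, and analogously $O(n^2 p^2)$ (or better, $O(n p^2)$) for Algorithm \ref{alg1}. For the stated theorem, however, $n$ is constant and these factors disappear.
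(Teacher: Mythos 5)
Your proposal is correct and follows essentially the same approach as the paper: a direct operation count per loop iteration, with the $\Theta(k^2)=\Theta(p^2)$ sumset computation dominating Algorithm \ref{alg1} and the $\OO(k)$-per-test intersection checks giving $\OO(n^2k)=\OO(p)$ for Algorithm \ref{alg2}. You are somewhat more explicit than the paper about the data-structure model (characteristic vectors) and you correctly observe that the translates $g^j-X_0$ are precomputed once for a cost of $\OO(nk)$ rather than recomputed inside the inner loop, but these refinements do not change the argument or the conclusion.
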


\begin{proof}
In Algorithm \ref{alg1}, each pass through the inner loop requires $\OO(k)$ comparisons, and each pass through the outer loop requires $\OO(k^2)$ additions.  So overall, $\OO(nk^2)$ additions and $\OO(n^2k)$ comparisons are required, for an overall runtime of $\OO(p^2)$ for $n$ fixed.

In Algorithm \ref{alg2}, each pass through the inner loop requires $\OO(k)$ additions and $\OO(k)$ comparisons.  So overall, $\OO(n^2k)$ additions and $\OO(n^2k)$ comparisons are required, for an overall runtime of $\OO(p)$ for $n$ fixed.
\end{proof}

Algorithm \ref{alg2} is  much faster in practice.  Both algorithms were implemented by the first author in Python 3.4. Timing data were collected for primes $p\equiv 1 \pmod{23}$ under 15,000 on an Intel Core i5-7500T $@$ 2.7GHz. See Figure \ref{fig:my_label}. The quadratic nature of Algorithm \ref{alg1} is evident.

\begin{figure}[hbt]
    \centering
    \includegraphics[width=4.5in]{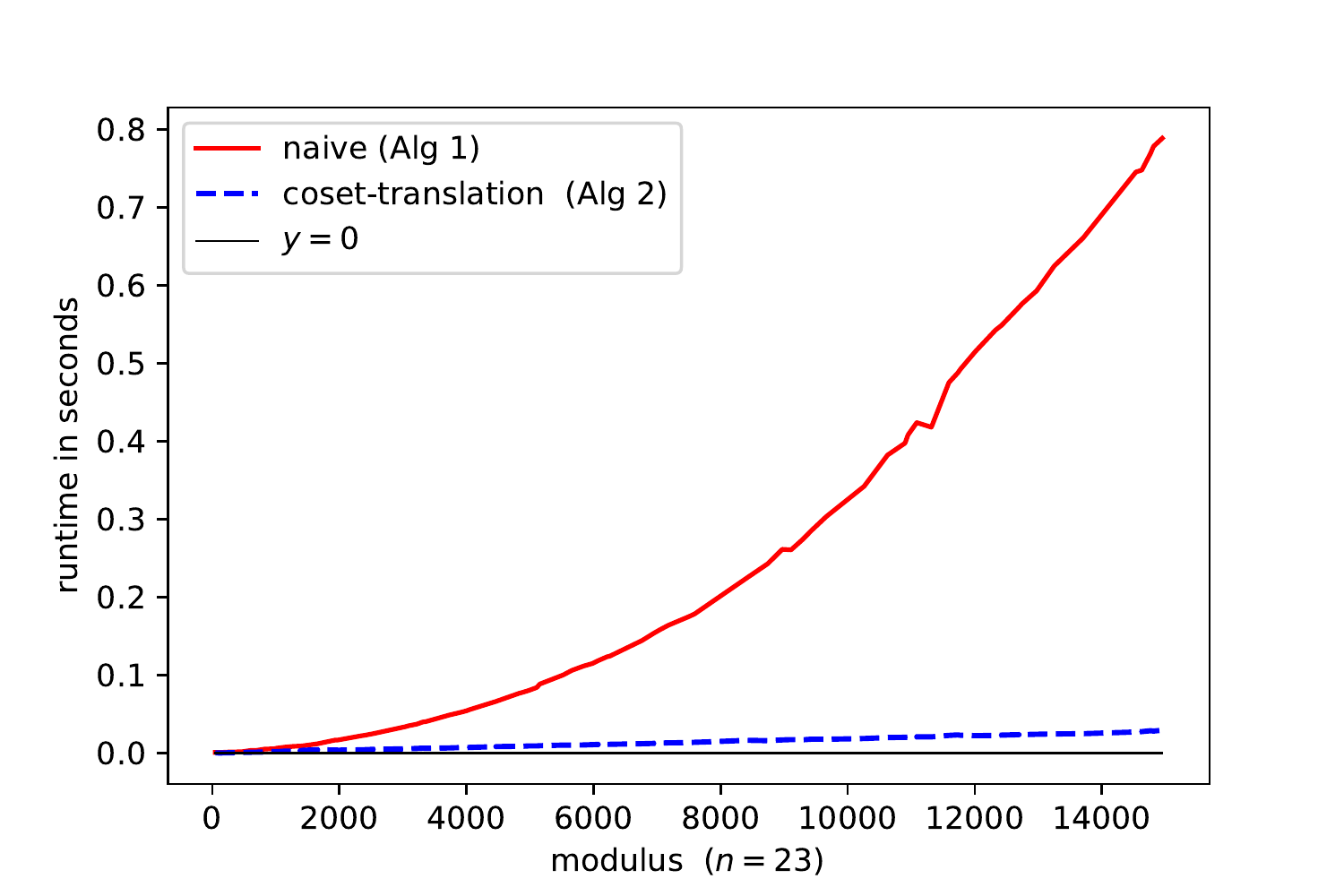}
    \caption{Run-time comparison for Algorithms 1 \& 2}
    \label{fig:my_label}
\end{figure}

\section{Asymmetric Comer algebras}

For the case of asymmetric algebras, we need to take a little more care in our enumeration over indices $i,j$ in checking whether $(X_0 + X_i) \supseteq X_j$. Let $n$ be even, where $n=2m$.   Since $-X_i = X_{i+m}$, where all indices are computed mod $n$, we have the following equivalence:
\begin{equation}\label{equiv}
    (X_0 + X_i) \supseteq X_j \longleftrightarrow (X_0 + X_{j+m}) \supseteq X_{i+m}.
\end{equation}
Thus for every triple $(0,i,j)$ of indices, there is an equivalent triple $(0,j+m,i+m)$ that would be redundant to check. So consider the involution $(0,i,j)\mapsto (0,j+m,i+m)$ on triples of indices.  The fixed points of this involution are of the form $(0,i,i+m)$.  (Of course, we continue to compute indices mod $n$.) Consider an $n\times n$ matrix $A$ where the entry $A_{ij} = (0,i,j+m) $. For example, see the matrix below, where $n=6$:

\[
  \begin{bmatrix}
    (0,0,3) & (0,0,4) & (0,0,5) & (0,0,0) & (0,0,1) & (0,0,2)  \\
    (0,1,3) & (0,1,4) & (0,1,5) & (0,1,0) & (0,1,1) & (0,1,2)  \\
    (0,2,3) & (0,2,4) & (0,2,5) & (0,2,0) & (0,2,1) & (0,2,2)  \\
    (0,3,3) & (0,3,4) & (0,3,5) & (0,3,0) & (0,3,1) & (0,3,2)  \\
    (0,4,3) & (0,4,4) & (0,4,5) & (0,4,0) & (0,4,1) & (0,4,2)  \\
    (0,5,3) & (0,5,4) & (0,5,5) & (0,5,0) & (0,5,1) & (0,5,2)  \\
  \end{bmatrix}
\]
Then the fixed points of the involution are on the diagonal, and $A_{ij}$ is equivalent to $A_{ji}$ by the equivalence \eqref{equiv}. Thus it suffices to enumerate over the ``upper triangle'' of the matrix.

In Algorithm \ref{alg3} below, the enumeration is done according to the discussion in the previous paragraph. 

\begin{algorithm}[H] \label{alg3}
 \KwData{A prime $p$, a divisor $n$ of $(p-1)$ such that $(p-1)/n$ is odd, a primitive root $g$ modulo $p$}
 \KwResult{a list of mandatory and forbidden cycles of the form $(0,x,y)$}
 Let $m = n/2$.
 Compute $X_0 = \{g^{an} \pmod{p} : 0\leq a < (p-1)/n \}$;
 
 Compute $g^j - X_0 \pmod{p}$ for each $0\leq j < n$;
 
 \For{$i\leftarrow 0$ \KwTo $n-1$}{
    $X_i = \{g^{an+i} \pmod{p} : 0\leq a < (p-1)/n \}$
    
    \For{$j\leftarrow i+m \pmod{n}$ \KwTo $n+m-1 \pmod{n}$}{
        \eIf{$(g^j - X_0)\cap X_i \neq \emptyset$}{
            add $(0,i,j)$ to list of mandatory cycles
            }{
            add $(0,i,j)$ to list of forbidden cycles
            }
    
    }
 }

 \caption{Fast algorithm for asymmetric Comer algebras}
\end{algorithm}

\section{Acknowledgements}
 We thank Wesley Calvert and Southern Illinois University for hosting the 2016 Langenhop Lecture and Mathematics Conference. The idea for this algorithmic improvement was obtained by the authors over a lunch break at the conference.



\end{document}